\long\def\symbolfootnote[#1]#2{\begingroup%
	\def\thefootnote{\fnsymbol{footnote}}\footnote[#1]{#2}\endgroup}
\newcommand{\C}{\ensuremath{\mathscr{C}}}
\newcommand{\p}{\mathscr{P}}
\newcommand{\Z}{\ensuremath{\mathbb{Z}}}
\newcommand{\h}{\textup{ht}}
\newcommand{\tensor}{\otimes}
\newcommand{\Mod}{\mathrm{mod}}
\newcommand{\supp}{\mathrm{Supp}}
\newcommand{\ind}{\mathrm{Ind}}
\newcommand{\irr}{\mathrm{Irr}}
\newcommand{\bst}{\mathrm{BST}}
\def\imod#1{\allowbreak\mkern10mu({\operator@font mod}\,\,#1)}
\newtheorem{theorem}{Theorem}[section]
\newtheorem*{theorem*}{Theorem}
\newtheorem{definition}[theorem]{Definition}
\newtheorem{observation}[theorem]{Observation}
\newtheorem{remark}[theorem]{Remark}
\newtheorem{example}[theorem]{Example}
\numberwithin{equation}{section}
\newcommand{\ignore}[1]{}
\newcommand{\mynote}[1]{}
\begin{document}

\title[Character Values Of Wreath Products And The Symmetric Group]{A Relationship Between Character Values Of Wreath Products And The Symmetric Group}

\author{Rijubrata Kundu}
\address{The Institute of Mathematical Sciences, Chennai 600113 \\ Homi Bhabha National Institute, Mumbai}
\email{rijubrata8@gmail.com}
	
\author{Papi Ray}
\address{Indian Institute of Technology, Kanpur-208016, India}
\email{popiroy93@gmail.com}
	
\date{\today}
\subjclass[2010]{20C30, 20E22, 05E10}
\keywords{symmetric group, wreath products, irreducible characters, Murnaghan-Nakayama rule}

\begin{abstract}
    \noindent A relation between certain irreducible character values of the hyperoctahedral group $B_n$ ($\Z/2\Z \wr S_n$) and the symmetric group $S_{2n}$ was proved by F. L\"ubeck and D. Prasad in 2021.  Their proof is algebraic in nature and uses Lie theory. Using combinatorial methods, R. Adin and Y. Roichman proved a similar relation between certain character values of $G\wr S_n$ and $S_{rn}$, where $G$ is an abelian group of order $r$ (generalizing the result of L\"ubeck-Prasad). Using their result, we prove yet another relation between certain irreducible character values of $G\wr S_n$ and $S_{rn}$, where $G$ is an abelian group of order $r$. 
\end{abstract}

\maketitle

\section{Introduction}\label{intro}

The representation theory of the wreath product $G\wr S_n$, where $G$ is a finite group, was developed by Specht in 1932. While the irreducible characters of the symmetric group $S_n$ are indexed by partitions of $n$, the irreducible characters of $G\wr S_n$ are parameterized by partition-valued functions on the set of irreducible characters of $G$ (see \Cref{sec.pre}). Equivalently, the irreducible characters of $G\wr S_n$ are indexed by $r$-partite partitions of $n$ (which are $r$-tuples of partitions indexed by irreducible characters of $G$), where $r$ is the number of irreducible characters of $G$ . Let $\psi_{\lambda}$ denote the irreducible character of $G\wr S_n$ indexed by an $r$-partite  partition $\lambda$ of $n$, and $\chi_{\nu}$ denote the irreducible character of $S_n$ indexed by a partition $\nu$ of $n$.

\medskip

In \cite{lp}, F. L\"ubeck and D. Prasad proved an interesting relationship between certain irreducible character values of the Weyl group $B_n$ (that is, $\Z/2\Z \wr S_n$) and $S_{2n}$ (which is also the Weyl group $A_{2n-1}$). They proved the following (see \cite[Theorem 1.1]{lp}): For a $2$-partite partition $\lambda$ and a partition $\mu=(\mu_1,\ldots,\mu_t)$ of $n$, $\psi_{\lambda}(1,1,\cdots,1,w_{\mu})=\epsilon(\lambda)\chi_{\hat{\lambda}}(w_{2\mu})$, where $2\mu:=(2\mu_1,\ldots,2\mu_t)$, $w_{\mu}$ (resp. $w_{2\mu}$) denotes the standard representative of the conjugacy class of $S_n$ (resp. $S_{2n}$) indexed by $\mu$ (resp. $2\mu$), $\hat{\lambda}$ is that partition of $2n$ whose $2$-quotient is $\lambda$, and $\epsilon(\hat{\lambda})\in \{\pm 1\}$. The proof of this result is algebraic in nature and uses Lie theory. R. Adin and Y. Roichman reproved this result using combinatorial methods. In fact, they provide a more general statement where $\Z/2\Z$ is replaced by an abelian group $G$ (see \cite[Theorem 4.2]{rr}). Moreover, their statement can be stated in the more general context of any wreath product $G\wr S_n$, and for those irreducible characters of $G\wr S_n$ which \emph{``arise''} from the irreducible characters of $G$ of the same degree. This is stated as \Cref{Theorem_of_RR_general} and its proof follows from a careful analysis of the proof of \cite[Theorem 4.2]{rr}. 

\medskip

Motivated by these results, we state and prove another character relationship between irreducible characters of $G\wr S_n$ evaluated on elements of the form $(a,a,\cdots, a, \pi)$ (where $a\in G$ and $\pi\in S_n$) and certain character values of the symmetric group $S_{rn}$, where $G$ is an abelian group of order $r$. This is stated as \Cref{main_theorem}. Once again, it turns out that our result can be stated more generally for those irreducible characters of $G\wr S_n$ which \emph{``arise''} from the linear characters of $G$, where $G$ is assumed to be any finite group. This is stated as \Cref{main_theorem2}.

\medskip

The paper is organized as follows: In \Cref{sec.pre}, to make the article self-contained, we provide the basic background on the conjugacy classes and irreducible characters of wreath products, and especially the Murnaghan-Nakayama rule which is the most important tool used in this article. In \Cref{sec.exp.rr}, we discuss a mild generalization of \cite[Theorem 4.2]{rr} (\Cref{Theorem_of_RR_general}). Finally, in \Cref{sec.main}, we state and prove our main results (\Cref{main_theorem} and \Cref{main_theorem2}) and include some examples to improve their clarity.

\section{Preliminaries}\label{sec.pre}

We briefly discuss the conjugacy classes and complex irreducible representations of the group $G\wr S_n$ where $G$ is any finite group, and we also set the notations for later use. We follow the expositions in \cite[Appendix B]{ma} and \cite[Chapter 4]{jk}.
\subsection{Conjugacy classes of $G \wr S_n$}

Let $\lambda=(\lambda_1,\lambda_2,\cdots, \lambda_l)$, where $\lambda_1\geq \cdots \geq \lambda_l$ and $\lambda_i\in \mathbb{N}$. We say $\lambda$ is a partition of $|\lambda|=n$ (written $\lambda\vdash |\lambda|=n$), where $|\lambda|=\sum_i\lambda_i=n$ is the sum of its parts. The number of parts of $\lambda$ is denoted by $l(\lambda)$. Alternatively, for a partition $\lambda$ of $n$, we write $\lambda=\langle 1^{m_1},\cdots, i^{m_i},\cdots\rangle$, where $m_i$ is the number of times $i$ occurs as a part in $\lambda$. Let $\p$ denote the set of all partitions (including the empty partition of 0, which will be denoted by $\emptyset$). For $\pi\in S_n$, let $m(\pi)=\langle 1^{m_1},2^{m_2},\cdots \rangle$ denote the cycle-type of $\pi$. Here, $m_i$ denotes the number of $i$-cycles in the disjoint cycle decomposition of $\pi$, whence we have $\sum_i im_i=n$. Thus, $m(\pi)$ yields a partition of $n$. Let $X$ be any finite set. We denote by $\mathscr{P}(X)$ the set of all partition-valued functions on $X$. If $\rho \in \p(X)$, let $\displaystyle ||\rho||=\sum_{c\in X}|\rho(c)|$. Let $\p_n(X)$ denote the set of all partition-valued functions $\rho$ on $X$ such that $||\rho||=n$. 

\medskip

Let $\C=\{C_0,C_1,\cdots,C_{r-1}\}$ be a labelling of the conjugacy classes of $G$. Let $x=(g_1,\cdots,g_n,\pi)\in G\wr S_n$ and $\pi=\pi_1\cdots\pi_k$ be a disjoint cycle decomposition of $\pi$. If $\pi_i=(i_1,i_2,\ldots,i_t)$, then $g_i(x)=g_{i_t}g_{i_{t-1}}\cdots g_{i_1}\in G$ is uniquely determined up to conjugation in $G$. We say that $g_i(x)$ is a cycle product of $g$ corresponding to the cycle $\pi_i$. This allows us to naturally associate $x\in G\wr S_n$ with a partition-valued function $\rho$ on $\C$ as follows: $C_j$ maps to a partition $\lambda=\langle 1^{m_1},\cdots,i^{m_i}, \cdots \rangle$ if and only if there exists $m_i$ cycles of length $i$ in the disjoint cycle decomposition of $\pi$ whose corresponding cycle products belong to $C_j$. It is trivially assumed that if there is no cycle in the disjoint cycle decomposition of $\pi$ whose cycle product lies in $C_j$, then $C_j$ maps to the empty partition of $0$. Since $\pi \in S_n$, it is clear that $||\rho||=n$. We call $\rho$ the type of $x$ and denote it by $\text{ty}(x)$. It is then a standard result that if $x,y\in G\wr S_n$, then $x$ and $y$ are conjugate in $G\wr S_n$ if and only if $\text{ty}(x)=\text{ty}(y)$. Of course, it is easy to see that if $\rho \in \p_n(\C)$, then there exists $x\in G\wr S_n$ such that $\text{ty}(x)=\rho$. Thus, the conjugacy classes of $G\wr S_n$  are parametrized by partition-valued functions $\rho$ on $\C$ such that $||\rho||=n$. If $G$ is trivial, then $\C$ is a singleton set and hence $\p_n(\C)$ is simply the set of all partitions of $n$. Indeed, as is well known, the conjugacy classes of $S_n$ are indexed by partitions of $n$.

\subsection{Complex irreducible characters of $G\wr S_n$} We briefly describe the irreducible characters of $G\wr S_n$. We closely follow the exposition in \cite[Chapter 4]{jk}. Let $\irr(G)$ denote the set of all complex irreducible characters of $G$ and we fix a labelling $\{\chi_{0},\cdots, \chi_{r-1}\}$ of $\irr(G)$. The irreducible characters of $S_n$ are indexed by partitions of $n$. Let $\chi_{\lambda}$ be the irreducible character of $S_n$ indexed by the partition $\lambda\vdash n$ (see \cite{ap,jk}). Let $\phi$ be an irreducible representation of $G$ with representation space $V$ and character $\chi$. Then, $\phi$ gives rise to an irreducible representation of $G\wr S_n$ with representation space $T^n(V)=\underbrace{V\tensor V\tensor \cdots \tensor V}_{n\;\text{times}}$ with the following action: If $x=(g_1,\cdots,g_n,\pi)\in G\wr S_n$, then $x.(v_1\tensor v_2 \tensor\cdots\tensor v_n):=\phi(g_1)(v_{\pi^{-1}(1)})\tensor \cdots \tensor \phi(g_n)(v_{\pi^{-1}(n)})$. We denote this representation by $\widetilde{\tensor^n\phi}$ and its character by $\widetilde{\tensor^n\chi}$. On the other hand, if the irreducible character $\chi_{\lambda}$ of $S_n$ is afforded by the representation $\rho_{\lambda}$ with representation space $V_{\lambda}$, then it trivially lifts to a representation of $G\wr S_n$ with representation space $V_{\lambda}$ by the action $(g_1,\cdots,g_n,\pi).v:=\rho_{\lambda}(\pi)(v)$, for every $v\in V_{\lambda}$. By abuse of notation, we call this representation of $G\wr S_n$ as $\rho_{\lambda}$ and its character as $\chi_{\lambda}$. The internal tensor product of the above two representations of $G\wr S_n$, that is, $\widetilde{\tensor^n \phi} \tensor \rho_{\lambda}$  yields an irreducible representation of $G\wr S_n$ with representation space being $T^n(V)\tensor V_{\lambda}$, and character $\widetilde{\tensor^n \chi} \tensor \chi_{\lambda}$. More generally, if $\rho \in \p_n(\irr(G))$ (with the fixed labelling of $\irr(G)$), and $\rho(\chi_i)=\lambda^i\vdash n_i$ for $0\leq i\leq r-1$, then an irreducible representation (or character) of $G\wr S_n$ can be constructed the following way: for $0\leq i\leq r-1$, let $\phi_i$ be the irreducible representation of $G$ that gives $\chi_i$. Then, $\widetilde{\tensor^n \phi_i} \tensor \rho_{\lambda^i}$ is an irreducible representation of $G\wr S_{n_i}$. The external tensor products of these representations, that is, $\overset{i}{\#}(\widetilde{\tensor^n \phi_i} \tensor \rho_{\lambda^i})$ yields an irreducible representation of $\times_iG\wr S_{n_i}:=G\wr S_{n_1}\times \cdots \times G\wr S_{n_{r-1}}$. Finally, the induction of this representation to $G\wr S_n$ (recall that $\displaystyle \sum_{i=0}^{r-1}n_i=n$), that is, $\ind_{\times_{i}G\wr S_{n_i}}^{G\wr S_n} \#^i(\widetilde{\tensor^n \phi_i} \tensor \rho_{\lambda^i})$ yields a representation of $G\wr S_n$. The character of this representation is $\ind_{\times_{i}G\wr S_{n_i}}^{G\wr S_n} \overset{i}{\#}(\widetilde{\tensor^n \chi_i} \tensor \chi_{\lambda^i})$. These turn out to be inequivalent irreducible representations (in other words, corresponding irreducible characters are distinct) of $G\wr S_n$ corresponding to distinct elements of $\p_n(\irr(G))$. Thus, irreducible characters of $G\wr S_n$ are parametrized by partition-valued functions on the set $\irr(G)$. 

\medskip

For $r\geq 1$, an $r$-partite partition $\lambda$ is an $r$-tuple $(\lambda^0,\cdots,\lambda^{r-1})$, where $\lambda^i\in \p$ for every $0\leq i\leq r-1$. We say $\lambda$ is an r-partite partition of $n$ if $\displaystyle \sum_{i=0}^{r-1}|\lambda^i|=n$. The set of all $r$-partite partitions of $n$ is denoted by $r$-Par(n). Any partition-valued function $\rho$ on $\irr(G)$ (with the fixed labelling) can be naturally identified with an $r$-partite partition $\lambda=(\lambda^0,\cdots,\lambda^{r-1})$, where $\lambda^i=\rho(\chi_i)$. Thus, irreducible characters of $G\wr S_n$ are indexed by $r$-partite partitions of $n$ (with a fixed labelling of the irreducible characters of $G$). Given an r-partite partition $\lambda$ of $n$, we write $\psi_{\lambda}$ to denote the irreducible character of $G\wr S_n$ indexed by $\lambda$.

\subsection{Murnaghan-Nakayama rule for irreducible characters of wreath products} The famous Murnaghan-Nakayama rule is a (recursive) formula for computing the values of irreducible characters of the symmetric group $S_n$ (see \cite[Theorem 5.11.1]{ap}, \cite[Theorem 7.17.3]{stan}). Stembridge proved a Murnaghan-Nakayama type formula for computing the values of the irreducible characters of $G\wr S_n$. We describe this rule here as it is the main tool used in this article. To begin with, we set-up some notations. Let $\lambda\vdash n$ and $T_{\lambda}$ denote the Young diagram of $\lambda$. A border-strip (or a rim-hook) is a connected skew diagram possessing no $2\times 2$ square.  Let $\mu=(\mu_1,\ldots,\mu_t)$ be a composition of $n$, that is, $\mu_i>0$ for all $1\leq i\leq t$, and $\sum_{i}\mu_i=n$.  A border-strip tableaux (abbrv. BST) of shape $\lambda$ and weight $\mu$ is a filling of the young diagram $T_{\lambda}$ such that  (a) $i$ appears $\mu_i$ times, (b) the entries are weakly increasing in both rows and columns, and (c) the skew diagram corresponding to the entries $i$ is a border-strip for every $1\leq i\leq t$. A border-strip tableaux of shape $\lambda$ and weight $\mu$ is equivalent to the successive removal of rim-hooks of size $\mu_i$, starting from $\mu_t$ to $\mu_1$ from $T_{\lambda}$, finally to end up with the empty diagram. Following \cite{rr}, we call this process, a $\mu$-peeling of $\lambda$. The height of a skew-diagram is one less than the number of rows present in it. For any tableaux $T$, let $\h_T(i)$ denote the height of the skew-diagram corresponding to the entry $i$. For convenience, we define $\displaystyle \h(T):=\sum_i \h_T(i)$. These notions can be generalized to $r$-partite partitions in an obvious way. Let  $\lambda=(\lambda^0, \cdots, \lambda^{r-1})$ be an $r$-partite partition, and $T_{\lambda}=(T_{\lambda^0}, \cdots, T_{\lambda^{r-1}})$ be the Young diagram of $\lambda$. A border strip tableaux of shape $\lambda$ and weight $\mu$ is defined to be a filling of $T_{\lambda}$ such that (a) $i$ appears $\mu_i$ times and all these appear in exactly one $T_{\lambda^j}$, (b) the entries are weakly increasing in both rows and columns, and (c) the skew diagram corresponding to the entries $i$ is a border-strip for every $1\leq i\leq t$. Once again, a border strip tableaux of shape $\lambda$ and weight $\mu$ is equivalent to successive removal of rim-hooks of size $\mu_t$ to  $\mu_i$ from $T_{\lambda}$ (that is, for each $1\leq i\leq t$, rim-hook of size $\mu_i$ is removed from some $T_{\lambda^j}$) until we reach the empty $r$-partite partition. For a  border-strip tableaux $T$ of shape $\lambda$ (an $r$-partite partition) and weight $\mu$, we define $f_T(i)$ (called the index of $i$ in $T$) to be $j\in \{0,1,\ldots,r-1\}$ where $i$ appears in $T_{\lambda^j}$ (note that $i$ appears in an unique $T_{\lambda^j}$ by (a)). The following example illustrates all these notions.

\begin{example}
    Let  $\lambda=((2,1),(1,1,1),(2,2),(2))$, and $\mu=(2,3,3,2,2)$. We take a border-strip tableaux of shape $\lambda$ and weight $\mu$ as follows: $\displaystyle T=(\hspace{.2cm}\ytableausetup{smalltableaux}\begin{ytableau} 3&3 \\3 \end{ytableau}\;,\;\begin{ytableau} 2 \\2\\2 \end{ytableau}\;, \;\begin{ytableau} 1&4 \\1&4 \end{ytableau}\;,\;\begin{ytableau} 5&5\end{ytableau}\;).$
    Since $\lambda^{(0)}$ is filled by $3$, so $f_T(3)=0$. For the same reason, $f_T(2)=1$, $f_T(1)=f_T(4)=2$, and $f_T(5)=3$. Further, $\h_T(1)=\h_T(3)=\h_T(4)=1$, $\h_T(2)=2$, and $\h_T(5)=0$. Thus, $\displaystyle \h(T)=\sum_i \h_T(i)=5$.
\end{example}

Now we are ready to state Stembridge's Murnaghan-Nakayama rule for the irreducible characters of $G\wr S_n$.

\begin{theorem}\cite[Theorem 4.3]{st}\label{MN_rule_wreath_product}
    Let $G$ be a finite group and $\irr(G)=\{\chi_0,\cdots,\chi_{r-1}\}$ be a labelling of the irreducible characters of $G$. Let $\psi_{\lambda}$ denote the irreducible character of $G\wr S_n$ indexed by an $r$-partite partition $\lambda$ (with respect to the labelling of $\irr(G)$). Let $x=(g_1,\cdots,g_n,\pi)\in G\wr S_n$, and  $\pi=\pi_1\cdots\pi_t$ be a disjoint cycle decomposition of $\pi$ with $l(\pi_i)=\mu_i$ for every $1\leq i\leq t$. Here, $l(\pi_i)$ denotes the length of the cycle $\pi_i$. Then,

    $$\psi_{\lambda}(x)=\sum_{T\in \mathrm{BST}(\lambda,\mu)}\prod_{i=1}^{t} (-1)^{\h_T(i)}\chi_{f_T(i)}(g_i(x)),$$
    where $\mu=(\mu_1,\cdots,\mu_t)$  and $\mathrm{BST}(\lambda,\mu)$ is the set of all border-strip tableaux of shape $\lambda$ and weight $\mu$.
\end{theorem}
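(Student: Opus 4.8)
The plan is to deduce this rule from the classical Murnaghan--Nakayama rule for $S_n$ by working inside the Frobenius characteristic ring of the tower $\bigoplus_{n\geq 0} R(G\wr S_n)$, where $R(G\wr S_n)$ denotes the ring of (virtual) characters of $G\wr S_n$. Recall (following \cite{jk,ma}) that, with the induction product, this graded ring is isomorphic via a characteristic map $\ch$ to the polynomial ring $\bigotimes_{j=0}^{r-1}\Lambda_j$, where each $\Lambda_j$ is a copy of the ring of symmetric functions attached to the irreducible character $\chi_j$ of $G$, with power-sum generators $p_m^{(j)}$. The construction of $\psi_{\lambda}$ recalled in \Cref{sec.pre} (as an induction product of twisted tensor powers $\widetilde{\tensor^{n_j}\phi_j}\tensor\rho_{\lambda^j}$) translates under $\ch$ into the clean statement $\ch(\psi_{\lambda})=\prod_{j=0}^{r-1}s_{\lambda^j}$, the product of Schur functions, one in each colour $j$.

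First I would pin down the pairing that recovers a character value from its characteristic. For $x=(g_1,\dots,g_n,\pi)$ whose cycles $\pi_1,\dots,\pi_t$ have lengths $\mu_1,\dots,\mu_t$ and cycle products $g_1(x),\dots,g_t(x)$, I would show that the value is recovered as $\psi_{\lambda}(x)=\langle\,\ch(\psi_{\lambda}),\ \tilde p_{\mu}(x)\,\rangle$, where $\tilde p_{\mu}(x):=\prod_{i=1}^t \tilde p_{\mu_i}(g_i(x))$ is the \emph{coloured} power-sum monomial built from $\tilde p_m(g):=\sum_{j=0}^{r-1}\chi_j(g)\,p_m^{(j)}$, exactly as $\chi_{\nu}(\mu)=\langle\,\ch(\chi_{\nu}),p_{\mu}\,\rangle$ in the symmetric-group case (the centraliser-order factors $z$ cancelling against the Hall inner product). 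Consequently the whole computation reduces to expanding $\prod_j s_{\lambda^j}$ in the power-sum basis and reading off the coefficient attached to $\tilde p_{\mu}(x)$.

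Next I would carry out this expansion one cycle at a time. Pairing against a single coloured power sum $\tilde p_{\mu_i}(g_i(x))=\sum_j \chi_j(g_i(x))\,p_{\mu_i}^{(j)}$ distributes over the colours $j$; fixing a colour $j$ amounts to applying the classical Murnaghan--Nakayama rule \cite[Theorem 7.17.3]{stan} to the single factor $s_{\lambda^j}$, that is, removing a border strip (rim-hook) of size $\mu_i$ from $T_{\lambda^j}$ and contributing the sign $(-1)^{\h_T(i)}$ given by the height of that strip. Iterating over $i=t,t-1,\dots,1$ peels the diagrams down to the empty $r$-partite partition, and each surviving term records, for every $i$, a choice of colour $f_T(i)=j$ together with a border strip of size $\mu_i$ lying in $T_{\lambda^{f_T(i)}}$; this is precisely a border-strip tableau $T\in\bst(\lambda,\mu)$. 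Its accumulated weight is $\prod_{i}(-1)^{\h_T(i)}\chi_{f_T(i)}(g_i(x))$, which is exactly the claimed summand.

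The main obstacle is the bookkeeping of the first two steps rather than the combinatorics of the third. One must fix the normalisation of $\ch$ and of the Hall pairing so that $\psi_{\lambda}(x)$ is recovered on the nose and not up to a scalar, and track the complex conjugation of the $\chi_j$ in the definition of $\tilde p_m$ against the pairing convention, since $G$ need not be abelian and the $\chi_j$ need not be real. One must also verify that the colour choices at distinct cycles are genuinely independent, so that the iterated single-colour expansions recombine into one sum over $\bst(\lambda,\mu)$ with no overcounting; this independence is exactly the multiplicativity of the factors $\tilde p_{\mu_i}$ across $i$, and it matches the defining property that the entries labelled $i$ in a border-strip tableau all lie in a single component $T_{\lambda^j}$.
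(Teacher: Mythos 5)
The paper itself offers no proof of \Cref{MN_rule_wreath_product}: the statement is quoted as a black box from Stembridge \cite[Theorem 4.3]{st}, so there is no internal argument to compare you against. What you outline is the standard characteristic-map proof of this rule, essentially the treatment of wreath-product character theory in \cite[Appendix B]{ma}, and it is correct in outline. The two structural inputs you isolate carry all the weight: (i) that $\ch(\psi_{\lambda})=\prod_{j}s_{\lambda^j}$ with each Schur function taken in the colour $j$ attached to $\chi_j$, which is exactly the identification of the induced characters constructed in \Cref{sec.pre} as the irreducibles, and (ii) the evaluation pairing. For (ii), the conjugation issue you flag does resolve itself: with the normalization $p_m(\chi_j)=|G|^{-1}\sum_{g\in G}\chi_j(g)\,p_m(g)$ each colour becomes an isometric copy of $\Lambda$, the class-coloured power sums invert as $p_m(c)=\sum_{j}\overline{\chi_j(c)}\,p_m^{(j)}$, and since $\ch$ is an isometry for the Hermitian pairing one gets $\psi_{\lambda}(x)=\bigl\langle \ch(\psi_{\lambda}),\,\prod_{i}p_{\mu_i}(c_i)\bigr\rangle$ with $c_i$ the class of $g_i(x)$; pulling the scalars $\overline{\chi_j(g_i(x))}$ out of the conjugate-linear slot restores $\chi_j(g_i(x))$, which is precisely your bilinear pairing against $\tilde p_{\mu}(x)$. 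Your third step is also sound and is cleanest via skewing operators: $(p_{\mu_i}^{(j)})^{\perp}$ acts only on the $j$-th tensor factor and, by the classical rule \cite[Theorem 7.17.3]{stan}, removes a single border strip of size $\mu_i$ with sign $(-1)^{\h_T(i)}$, so peeling the cycles $i=t,\dots,1$ produces exactly one term per element of $\bst(\lambda,\mu)$ with weight $\prod_{i}(-1)^{\h_T(i)}\chi_{f_T(i)}(g_i(x))$, with no overcounting because distinct colours are independent tensor factors. The only caveat is that, as written, you assert (i) and (ii) rather than prove them; both are genuinely nontrivial, but they are precisely what \cite[Appendix B]{ma} supplies, so your proposal is a faithful reconstruction of the known proof rather than a new or incomplete one.
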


We note that if $G$ is trivial, we get back the classical Murnaghan-Nakayama rule for irreducible characters of $S_n$ (see \cite[Theorem 7.15]{stan}), that is, for $\lambda\vdash n$ and $\pi\in S_n$
\begin{equation}\label{MN_rule_Sym_group}
    \chi_{\lambda}(\pi)=\sum_{T\in \mathrm{BST}(\lambda,\mu)}(-1)^{\h(T)},
\end{equation}

where $\pi=\pi_1\cdots\pi_t$ is a disjoint cycle decomposition of $\pi$ and $l(\pi_i)=\mu_i$ as before, which yields the weak composition $\mu=(\mu_1,\cdots,\mu_t)$ of $n$.

\section{The Theorem of Roichman-Adin}\label{sec.exp.rr}

In this section, we briefly discuss the result of Roichman-Adin and a mild generalisation that easily follows from the proof of their theorem. Their result involves the well-known notions of cores and quotients of a partition (see \cite[2.7]{jk}). Let $\lambda\vdash n$ and $r\in \mathbb{N}$. Let $\tilde{\lambda}$ denote the $r$-core of $\lambda$ and $[\lambda]_r$ denote the $r$-quotient of $\lambda$. By definition, $[\lambda]_r$ is an $r$-partite partition of $k$, where $k$ is the number of rim-hooks of length $r$ removed from $\lambda$ to obtain its $r$-core $\tilde{\lambda}$. It is a well-known theorem that the pair $(\tilde{\lambda}, [\lambda]_r)$ determines $\lambda$ uniquely. All these can be proved and visualized using the notion of an abacus diagram (see \cite[2.7]{jk} for details). For $r,n\in \mathbb{N}$, let $\lambda$ be a partition of $rn$ with an empty $r$-core, that is, $\tilde{\lambda}=\emptyset$. The $r$-quotient $[\lambda]_r$ is then an $r$-partite partition of $n$. Let $\p_{rn}(\emptyset)$ denote the set of all partitions of $rn$ with an empty $r$-core. Then, $\lambda \mapsto [\lambda]_r$ is a bijection between the sets $\p_{rn}(\emptyset)$ and $r$-Par($n$). Let us denote the converse map by $\lambda\mapsto \hat{\lambda}$, where $\lambda$ is an $r$-partite partition of $n$ and $\hat{\lambda}$ is a partition of $rn$ with an empty $r$-core such that its $r$-quotient is $\lambda$. In \cite[Section 2]{rr}, the authors describe all these in detail using $0$-$1$ encoding of partitions. The construction of $\hat{\lambda}$ from $\lambda$ (using $0$-$1$ encoding) is of particular importance as it is used in the proof of their main theorem, which is stated below.

\begin{theorem}\cite[Theorem 4.2]{rr}\label{Theorem_of_RR}
Let $G$ be an abelian group of order $r$ and $\lambda$ be an $r$-partite partition. Let $x=(e,\ldots, e,\pi)\in G\wr S_n$ (where $e$ is the identity of $G$) and $\pi=\pi_1\cdots\pi_t$ be a disjoint cycle decomposition of $\pi$, with $l(\pi_i)=\mu_i$ for every $1\leq i\leq t$.  Then,
    $$\psi_{\lambda}(x)=\mathrm{sign}_{r}(\hat{\lambda})\chi_{\hat{\lambda}}(w_{r\mu}),$$
    where $w_{r\mu}\in S_{rn}$ has cycle-type given by $r\mu:=(r\mu_1,\cdots,r\mu_t)$, and $\mathrm{sign}_r:\p_{rn}(\emptyset)\to \{\pm 1\}$ is the map in \cite[Definition 3.7]{rr}.
\end{theorem}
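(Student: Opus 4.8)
The plan is to compute both sides with the relevant Murnaghan--Nakayama rules and thereby reduce the statement to a signed count of border-strip tableaux. Applying Stembridge's rule (\Cref{MN_rule_wreath_product}) to $x=(e,\ldots,e,\pi)$, every cycle product satisfies $g_i(x)=e$, and since $G$ is abelian each $\chi_{f_T(i)}$ is a linear character with $\chi_{f_T(i)}(e)=1$; hence the product over $i$ collapses and
$$\psi_{\lambda}(x)=\sum_{T\in \mathrm{BST}(\lambda,\mu)}(-1)^{\h(T)}.$$
On the symmetric-group side, the classical rule \eqref{MN_rule_Sym_group} gives
$$\chi_{\hat{\lambda}}(w_{r\mu})=\sum_{S\in \mathrm{BST}(\hat{\lambda},r\mu)}(-1)^{\h(S)}.$$
Thus the theorem reduces to a purely combinatorial signed-enumeration identity: I must match the $\mu$-peelings of the $r$-partite shape $\lambda$ with the $r\mu$-peelings of the single partition $\hat{\lambda}$, and then compare the two height statistics.

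For the bijection I would pass to the $r$-runner abacus, equivalently the $0$-$1$ encoding of \cite[Section 2]{rr} (see also \cite[2.7]{jk}). Fix a bead configuration for $\hat{\lambda}$ with a multiple of $r$ beads, so that the runner data realise the $r$-quotient $[\hat{\lambda}]_r=\lambda$. Removing a rim-hook of size $r\mu_i$ from $\hat{\lambda}$ amounts to moving a single bead from a position $p$ to $p-r\mu_i$; since $r\mu_i$ is a multiple of $r$ this keeps the bead on its runner, and a move of $\mu_i$ levels along one runner is exactly the removal of a rim-hook of size $\mu_i$ from the quotient component indexed by that runner. Because $\hat{\lambda}$ has empty $r$-core, the number of beads on each runner is constant and a complete $r\mu$-peeling exists, which restricts runner by runner to a complete $\mu$-peeling of the components of $\lambda$. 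Recording at each step the runner on which the bead moves produces the index datum $f_T(i)$, and inverting the abacus dictionary shows $S\mapsto T$ is a bijection $\mathrm{BST}(\hat{\lambda},r\mu)\to \mathrm{BST}(\lambda,\mu)$.

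It remains to compare heights, and this is where the real content lies. A bead move of $\mu_i$ levels from position $p$ sweeps the open interval $(p-r\mu_i,p)$: the full height $\h_S$ counts all beads in that interval, whereas the component height $\h_T$ counts only those on the moving bead's runner, so $\h(S)-\h(T)$ equals the total number of times, over the whole peeling, that a moving bead passes a bead on a \emph{different} runner. I would then show this parity depends only on $\hat{\lambda}$: two beads on distinct runners never share a position, and only a pass between them can change their relative vertical order, so the parity of the number of passes between a fixed such pair equals the indicator that their order in the configuration of $\hat{\lambda}$ differs from their order in the terminal (empty-partition) configuration. Summing over all cross-runner pairs, $\h(S)-\h(T)\bmod 2$ is determined by $\hat{\lambda}$ alone; one also checks that this count is unchanged under rematching tokens within a single runner (swapping two same-runner endpoints alters, for any third bead, either both incident cross-pairs or neither). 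Hence $(-1)^{\h(S)-\h(T)}$ is a constant $\mathrm{sign}_r(\hat{\lambda})\in\{\pm1\}$, and identifying it with the map of \cite[Definition 3.7]{rr} finishes the proof.

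The main obstacle is precisely this last step. The individual step-heights $\h_S$ and $\h_T$ depend on the intermediate bead configurations and are not separately invariant, so a naive term-by-term comparison fails; it is only the \emph{difference}, reorganised as a cross-runner crossing number between the fixed initial and terminal configurations, that collapses to a tableau- and $\mu$-independent quantity. Establishing this crossing/relative-order invariance (and matching its parity with the explicit $\mathrm{sign}_r$ of \cite{rr}) is the crux of the argument.
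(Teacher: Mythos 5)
Your proposal is correct and follows essentially the same route as the paper, which defers to Adin--Roichman: apply the two Murnaghan--Nakayama rules, match $\mu$-peelings of $\lambda$ with $r\mu$-peelings of $\hat{\lambda}$ via the abacus ($0$-$1$ encoding), and show that the discrepancy between the two height statistics is a tableau-independent sign $\mathrm{sign}_r(\hat{\lambda})$. Your cross-runner crossing-parity argument is precisely a reconstruction of the combinatorial content of \cite[Theorem 2.11]{rr} and the sign analysis, i.e.\ of \Cref{eqn_2} and \Cref{eqn_3}, which the paper cites rather than reproves.
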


In order to describe a mild generalization of the above result, we summarize the main steps of the proof of the above result. With the assumption of the theorem and using \Cref{MN_rule_wreath_product}, it follows that
\begin{equation}\label{eqn_1}
    \psi_{\lambda}(x)=\sum_{T\in \mathrm{BST}(\lambda,\mu)} (-1)^{\h(T)}
\end{equation}

One of the main step is to realize the above sum in terms of $\hat{\lambda}$. Using the  interpretation of cores and quotients using the $0$-$1$ encoding, the authors (see \cite[Theorem 2.11]{rr}) prove that

\begin{equation}\label{eqn_2}
    \sum_{T\in \mathrm{BST}(\lambda,\mu)} (-1)^{\h(T)}=\sum_{T\in \mathrm{BST}(\hat{\lambda},r\mu)} (-1)^{\h_0(T)},
\end{equation}
where $r\mu:=(r\mu_1,\cdots, r\mu_t)$ and $\h_0(T)$ is a statistic corresponding to $T$ (similar to the usual $\h(T)$ we have defined) that can be interpreted using the $0$-$1$ encoding of $\hat{\lambda}$ (step 4, main loop, \cite[Theorem 2.11]{rr}).  Finally, the authors show that given a border-strip tableaux $T$ of shape $\hat{\lambda}$ and weight $r\mu$, the two statistic $(-1)^{\h(T)}$ and $(-1)^{\h_0(T)}$ differ by a sign, which is irrespective of the tableaux $T$. This sign is determined by the map $\mathrm{sign}_r:\p_{rn}(\emptyset)\to \{\pm 1\}$, that is, 

\begin{equation}\label{eqn_3}
    \sum_{T\in \mathrm{BST}(\hat{\lambda},r\mu)} (-1)^{\h_0(T)}=\mathrm{sign}_r(\hat{\lambda})\sum_{T\in \mathrm{BST}(\hat{\lambda},r\mu)} (-1)^{\h(T)},
\end{equation}

\noindent where $\mathrm{sign}_r(\hat{\lambda})$ depends only on $r$ and $\hat{\lambda}$. By the classical Murnaghan-Nakayama rule for the symmetric groups, RHS of the above equation is clearly $\mathrm{sign}_r(\hat{\lambda})\chi_{\hat{\lambda}}(w_{r\mu})$ and hence the result.

\begin{remark}
    Although the definition of the map $\mathrm{sign}_r:\p_{rn}(\emptyset)\to \{\pm 1\}$ appears a little bit technical (using $0$-$1$ sequences), a close look at its definition provides an alternative description for it in terms of character values of $S_{rn}$. Let $\eta=\underbrace{(r,r,\cdots,r)}_{\text{$n$ times}}\vdash rn$, $w_{\eta}$ be a disjoint product of $r$ many $n$-cycles, and $\lambda\in \p_{rn}(\emptyset)$. For any $T\in \bst(\lambda, \eta)$ (which is equivalent to a sequence of removal of $n$ many rim-hooks of length $r$), it can be shown that $(-1)^{\h(T)}$ is independent of $T$, by using the notion of numbered bead configuration of an abacus diagram (\cite[2.7.25]{jk}). It turns out that  $(-1)^{\h(T)}$ is precisely $\mathrm{sign}_r(\lambda)$, that is, $\chi_{\lambda}(w_{\eta})=\mathrm{sign}_r(\lambda)|\bst(\lambda, \eta)|$ (using \Cref{MN_rule_Sym_group}). In other words, $\mathrm{sign}_r(\lambda)=\frac{\chi_{\lambda}(w_{\eta})}{|\chi_{\lambda}(w_{\eta})|}$. \cite[2.7.32]{jk} yields a formula for $|\bst(\lambda, \eta)|$ which is clearly seen to equal the degree of the character $\psi_{[\lambda]_r}$. Of course, this is also a special case of \Cref{Theorem_of_RR} as is also mentioned in \cite{lp} for $r=2$. In fact, $\mathrm{sign}_2(\lambda)=(-1)^{\mathrm{odd}(\lambda)}$, where $\mathrm{odd}(\lambda)$ is the number of odd parts in $\lambda$ (see \cite[Proposition 4.2]{lp}). We reiterate \textbf{Question 5.5} of \cite{rr} that asks for a similar simple description of $\mathrm{sign}_r(\lambda)$ for $r>2$.
\end{remark}

To state our mild generalisation, we need a definition. For an $r$-partite partition $\lambda=(\lambda^0,\cdots,\lambda^{r-1})$, we define $\bm{\supp(\lambda):=\{i \mid \lambda^i\neq \emptyset\}}$. Recall that $\psi_{\lambda}$ is the irreducible character of $S_n$ indexed by $\lambda$ with respect to the ordering $\irr(G)=\{\chi_0,\cdots,\chi_{r-1}\}$. We can now state the generalization.

\begin{theorem}\label{Theorem_of_RR_general}
    Let $G$ be a finite group of order $r$ and $\lambda$ be an $r$-partite partition. Assume that there exists $d\in \mathbb{N}$ such that $\deg(\chi_i)=d$, for every $i\in \supp(\lambda)$. Let $x=(e,\ldots, e,\pi)\in G\wr S_n$ (where $e$ is the identity of $G$) and $\pi=\pi_1\cdots\pi_t$ be a disjoint cycle decomposition of $\pi$, with $l(\pi_i)=\mu_i$ for every $1\leq i\leq t$.  Then,
    $$\psi_{\lambda}(x)=d^t\mathrm{sign}_{r}(\hat{\lambda})\chi_{\hat{\lambda}}(w_{r\mu}),$$
    where $w_{r\mu}\in S_{rn}$ has cycle-type given by $r\mu:=(r\mu_1,\cdots,r\mu_t)$, and $\mathrm{sign}_r:\p_{rn}(\emptyset)\to \{\pm 1\}$ is the map in \cite[Definition 3.7]{rr}.
\end{theorem}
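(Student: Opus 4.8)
The plan is to reduce \Cref{Theorem_of_RR_general} to the already-established \Cref{Theorem_of_RR} by isolating the effect of the characters $\chi_i$ in Stembridge's rule. First I would apply \Cref{MN_rule_wreath_product} to $\psi_{\lambda}(x)$ with $x=(e,\ldots,e,\pi)$. Since every $g_i(x)$ is a product of identity elements, each cycle product equals the identity $e\in G$, so $\chi_{f_T(i)}(g_i(x))=\chi_{f_T(i)}(e)=\deg(\chi_{f_T(i)})$. This turns the character factors into degrees, giving
\begin{equation}\label{eqn_gen_1}
    \psi_{\lambda}(x)=\sum_{T\in \mathrm{BST}(\lambda,\mu)}\prod_{i=1}^{t}(-1)^{\h_T(i)}\deg(\chi_{f_T(i)}).
\end{equation}

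The key observation is that the index $f_T(i)$ records which component $T_{\lambda^j}$ the entry $i$ lives in, and by condition (a) of a border-strip tableaux every entry $i$ must lie in some component with $\lambda^{f_T(i)}\neq\emptyset$, hence $f_T(i)\in\supp(\lambda)$. By the hypothesis that $\deg(\chi_i)=d$ for every $i\in\supp(\lambda)$, each factor $\deg(\chi_{f_T(i)})$ equals $d$, independent of $T$ and of $i$. Since there are exactly $t$ entries $1,\ldots,t$ in any $T\in\mathrm{BST}(\lambda,\mu)$, the product $\prod_{i=1}^{t}\deg(\chi_{f_T(i)})=d^{t}$ factors out of the sum uniformly. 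Therefore
\begin{equation}\label{eqn_gen_2}
    \psi_{\lambda}(x)=d^{t}\sum_{T\in \mathrm{BST}(\lambda,\mu)}(-1)^{\h(T)},
\end{equation}
where I have used $\h(T)=\sum_i\h_T(i)$ together with $\prod_i(-1)^{\h_T(i)}=(-1)^{\h(T)}$.

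The remaining sum $\sum_{T\in \mathrm{BST}(\lambda,\mu)}(-1)^{\h(T)}$ is exactly the quantity appearing in \eqref{eqn_1}, which is the content of \Cref{Theorem_of_RR} in the case $G$ abelian of order $r$. However, I must be careful here: \Cref{Theorem_of_RR} assumes $G$ is abelian, whereas the present statement only assumes $G$ has order $r$ with the degree condition on the support. The point is that the reduction in \eqref{eqn_1}--\eqref{eqn_3} is purely combinatorial: identities \eqref{eqn_2} and \eqref{eqn_3} manipulate border-strip tableaux of $\lambda$ and $\hat\lambda$ via the $0$--$1$ encoding and make no use of the group structure of $G$ beyond the labelling $\irr(G)=\{\chi_0,\ldots,\chi_{r-1}\}$ of size $r$. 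Thus the chain \eqref{eqn_2} followed by \eqref{eqn_3} yields
\[
    \sum_{T\in \mathrm{BST}(\lambda,\mu)}(-1)^{\h(T)}=\mathrm{sign}_r(\hat\lambda)\,\chi_{\hat\lambda}(w_{r\mu}),
\]
and substituting this into \eqref{eqn_gen_2} gives the claimed formula $\psi_{\lambda}(x)=d^{t}\mathrm{sign}_r(\hat\lambda)\,\chi_{\hat\lambda}(w_{r\mu})$.

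I expect the main obstacle to be the bookkeeping issue flagged above: verifying that $\hat\lambda$ is well-defined and that the bijection $\lambda\mapsto\hat\lambda$ together with $\mathrm{sign}_r$ depend only on the number $r$ of components (equivalently, on $|\irr(G)|=r$, which for any finite group of order $r$ coincides only when $G$ is abelian). If $G$ is non-abelian of order $r$, then $|\irr(G)|<r$, so $\lambda$ would be an $s$-partite partition with $s=|\irr(G)|<r$, and one must check whether the theorem intends $\hat\lambda\in\p_{rn}(\emptyset)$ or $\hat\lambda\in\p_{sn}(\emptyset)$; resolving this compatibility—and confirming that the degree hypothesis is precisely what forces the character factors to collapse to $d^{t}$—is the delicate part, while everything else follows mechanically from \Cref{MN_rule_wreath_product} and the combinatorial core of \Cref{Theorem_of_RR}.
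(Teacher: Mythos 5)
Your proof is correct and is essentially identical to the paper's own: both evaluate \Cref{MN_rule_wreath_product} at $(e,\ldots,e,\pi)$, use the observation $f_T(i)\in\supp(\lambda)$ to replace every factor $\chi_{f_T(i)}(e)$ by $d$ and pull $d^t$ out of the sum, and then conclude via the purely combinatorial identities \Cref{eqn_2} and \Cref{eqn_3}, which indeed depend only on the number of components of $\lambda$ and not on the group structure. The caveat you raise at the end (for non-abelian $G$ one has $|\irr(G)|<|G|=r$, so $\lambda$ must be read as an $r$-tuple whose components beyond the actual character indices are empty, with $\hat{\lambda}\in\p_{rn}(\emptyset)$ and $\mathrm{sign}_r$ computed from that padded $r$-tuple) points to a genuine imprecision in the statement rather than a gap in your argument, and it is resolved exactly as you suggest---the paper adopts this padding convention implicitly, as is visible from the support condition in \Cref{main_theorem2}.
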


\begin{proof}
    For $T\in \mathrm{BST}(\lambda,\mu)$ and $1\leq i\leq t$, note that $f_T(i)\in \supp(\lambda)$ and hence $\chi_{f_T(i)}(e)=d$. Thus, using \Cref{MN_rule_wreath_product} we get that 
    $$\displaystyle \psi_{\lambda}(x)=d^t\sum_{T\in \mathrm{BST}(\lambda,\mu)}(-1)^{\h(T)}.$$
    Now the result follows from \Cref{eqn_2} and \Cref{eqn_3}.
\end{proof}

\section{The main theorem}\label{sec.main}
In this section, we will state and prove the main theorem, namely Theorem \ref{main_theorem} of this article. Some examples are also given to illustrate the theorem. To state the result, we need a few notations. For $r\geq 2$, let \bm{$\zeta=e^{2\pi i/r}$} and $C_r=\langle x \rangle$ be the cyclic group of order $r$ generated by an element $x$. Set $\irr(C_r)=\{\theta_0,\ldots, \theta_{r-1}\}$, where $\theta_i$ is the irreducible character  of $C_r$ defined by $a\mapsto \zeta^i$.
\begin{theorem}\label{main_theorem}
    Let $G$ be a finite abelian group of order $d$, and $\irr(G)=\{\chi_0,\chi_1,\cdots,\chi_{d-1}\}$ denote a fixed labelling of the irreducible characters of $G$. Let $a\in G$ be an element of order $r$ and $m=d/r$. Assume that for all $0\leq k\leq r-1$, $\{\chi_{1^{(k)}}, \chi_{2^{(k)}}, \cdots, \chi_{m^{(k)}} \}$ be the set of all those irreducible characters of $G$ (with respect to the labelling of $\irr(G)$) whose restriction to $H:=\langle a \rangle$ is $\theta_k$. Let $\lambda=(\lambda^0,\cdots,\lambda^{d-1})$ be a $d$-partite partition of $n$ with $\lambda^i\vdash n_i$ for every $0\leq i\leq d-1$, and $\psi_{\lambda}$ denote the irreducible character of $G\wr S_n$ indexed by $\lambda$ (w.r.t the fixed labelling of $\irr(G)$). Then,
    $$\psi_{\lambda}(a,a,\ldots,a,\pi)=\zeta^{\alpha}\mathrm{sign}_d(\hat{\lambda})\chi_{\hat{\lambda}}(w_{d\mu}),$$
    where $\displaystyle \alpha=\sum_{k=0}^{r-1}\sum_{i=1}^{m}kn_{i^{(k)}}$, $\pi\in S_n$, $\pi=\pi_1\pi_2\cdots\pi_t$ be a disjoint cycle decomposition of $\pi$ with $l(\pi_i)=\mu_i$ for every $1\leq i\leq t$ (whence $\mu:=(\mu_1,\cdots, \mu_t))$, and $d\mu:=(d\mu_1,d\mu_2,\cdots,d\mu_t)$.
\end{theorem}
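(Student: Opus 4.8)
The plan is to apply Stembridge's Murnaghan--Nakayama rule (\Cref{MN_rule_wreath_product}) directly to the element $x=(a,a,\ldots,a,\pi)$, and then to factor the resulting character sum into a root-of-unity scalar times the purely combinatorial sum already handled in \Cref{Theorem_of_RR}. First I would compute the cycle products: since $\pi=\pi_1\cdots\pi_t$ with $l(\pi_i)=\mu_i$ and every $G$-coordinate of $x$ equals $a$, the cycle product attached to $\pi_i$ is $g_i(x)=a^{\mu_i}$, and because $G$ is abelian this is a genuine element with no conjugacy ambiguity. Substituting into the rule gives
$$\psi_{\lambda}(x)=\sum_{T\in \mathrm{BST}(\lambda,\mu)}\prod_{i=1}^{t}(-1)^{\h_T(i)}\chi_{f_T(i)}(a^{\mu_i}).$$

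Since $G$ is abelian, each $\chi_j$ is linear, so $\chi_{f_T(i)}(a^{\mu_i})=\chi_{f_T(i)}(a)^{\mu_i}$. By hypothesis the index $f_T(i)$ lies in exactly one block $\{1^{(k)},\ldots,m^{(k)}\}$, and for $j$ in that block $\chi_j|_H=\theta_k$, i.e.\ $\chi_j(a)=\zeta^{k}$. Writing $k(j)$ for the block of $j$, this yields $\chi_{f_T(i)}(a^{\mu_i})=\zeta^{k(f_T(i))\mu_i}$. The crux is to show that $\prod_i\zeta^{k(f_T(i))\mu_i}=\zeta^{\alpha}$ \emph{independently of $T$}. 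I would group the rim-hooks of a fixed $T$ according to the component $T_{\lambda^j}$ into which they are peeled: since a border-strip tableau of the $d$-partite shape $\lambda$ fills each $\lambda^j\vdash n_j$ completely, the rim-hooks with index $j$ have total size $n_j$, regardless of $T$. Hence
$$\sum_{i=1}^{t} k(f_T(i))\,\mu_i=\sum_{j=0}^{d-1}k(j)\,n_j=\sum_{k=0}^{r-1}\sum_{i=1}^{m}k\,n_{i^{(k)}}=\alpha,$$
so the root-of-unity factor equals the constant $\zeta^{\alpha}$ and can be pulled out of the sum. This $T$-independence, which comes entirely from the fixed component sizes $n_j$, is the only genuinely non-formal point, and it is short; I expect it to be the main (albeit mild) obstacle.

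Finally, factoring out $\zeta^{\alpha}$ leaves $\psi_{\lambda}(x)=\zeta^{\alpha}\sum_{T\in \mathrm{BST}(\lambda,\mu)}(-1)^{\h(T)}$. This remaining sum is exactly the quantity analyzed in the proof of \Cref{Theorem_of_RR}: by \eqref{eqn_2} and \eqref{eqn_3}, applied to the $d$-partite partition $\lambda$ (that is, with $d$ playing the role of $r$), it equals $\mathrm{sign}_{d}(\hat{\lambda})\chi_{\hat{\lambda}}(w_{d\mu})$. Substituting this back gives the claimed identity
$$\psi_{\lambda}(a,\ldots,a,\pi)=\zeta^{\alpha}\,\mathrm{sign}_{d}(\hat{\lambda})\,\chi_{\hat{\lambda}}(w_{d\mu}),$$
completing the argument.
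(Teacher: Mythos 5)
Your proposal is correct and takes essentially the same route as the paper's proof: apply Stembridge's rule (\Cref{MN_rule_wreath_product}), show that the product $R_T=\prod_i\chi_{f_T(i)}(a^{\mu_i})$ is a $T$-independent power of $\zeta$ equal to $\zeta^{\alpha}$ because the rim-hooks landing in each component $\lambda^j$ have total size $n_j$, and then conclude via \Cref{eqn_2} and \Cref{eqn_3}. The only difference is organizational: the paper first proves the case where $G$ is cyclic and generated by $a$, then reduces the general abelian case to it by restricting characters to $H=\langle a\rangle$ (along the way verifying that each $\theta_k$ has exactly $m=d/r$ extensions to $G$, which justifies the labelling hypothesis you simply take as given), whereas you carry out the computation directly in the general case with exact integer exponents rather than congruences modulo $r$.
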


\begin{proof}
    We first assume that $G$ is a cyclic group of order $d$ and $a\in G$ is of order $d$, that is, $a$ generates $G$. In this case $m=1$, and hence for every $0\leq k\leq d-1$, $\chi_{1^{(k)}}=\theta_k$ (where $\theta_k:a\mapsto \zeta^k$). Set $y=(a,\ldots,a,\pi)$. For $T\in \mathrm{BST}(\lambda,\mu)$, let $\displaystyle R_T=\prod_{i=1}^{t}\chi_{f_T(i)}(g_i(y))$. Notice that $g_i(y)=a^{\mu_i}$, whence if $\mu_i\equiv j(\Mod\; r)$ where $0\leq j\leq r-1$, then $g_i(y)=a^j$.  Consider the filling in each single ordinate $T_{\lambda^s}$ ($0\leq s\leq d-1$) separately. For each $1\leq i\leq t$, whenever $j\in T_{\lambda^s}$, note that $f_T(i)=s$. Thus, for each such $i$ appearing in $T_{\lambda^s}$,  if $\mu_i=j(\Mod\;r)$, then $\chi_{f_T(i)}(g_i(y))=\chi_{s}(a^j)$. Now, there exists $0\leq k\leq d-1$ such that $\chi_{s}=\chi_{1^{(k)}}=\theta_{k}$, whence $\chi_s(a^j)=\zeta^{kj}$. For $0\leq j\leq r-1$, let us now assume that $T_{\lambda^s}$ consists of  $a_j$ many distinct $i$'s with $\mu_i\equiv j\;(\Mod\;r)$. Note that $a_j=0$ if no such $i$ occurs. Then,
    $$\displaystyle \prod_{\substack{i=1 \\ i\in T_{\lambda^s}}}^{t} \chi_{f_T(i)}(g_i(y))= \zeta^{\sum\limits_{j=0}^{r-1} ka_jj}=\zeta^{k n_s}.$$
    The last equality follows as $\lambda^s\vdash n_s$ and $\displaystyle n_s=\sum_{j=0}^{r-1}\sum_{\substack{i\in T_{\lambda^s} \\ \mu_i\equiv j\;(\Mod\; r)}}\mu_i \equiv \sum_{j=0}^{r-1}ja_j\;(\Mod\;r)$. Finally, by our notation, $\zeta^{kn_s}=\zeta^{kn_{1^{(k)}}}$. We conclude that 
    $$\displaystyle R_T=\prod_{s=0}^{d-1}\prod_{\substack{i=1 \\ i\in T_{\lambda^s}}}^{t} \chi_{f_T(i)}(g_i(y))=\zeta^{\alpha},$$
    where $\displaystyle \alpha=\sum_{k=0}^{d-1}n_{1^{(k)}}$. Thus, $R_T$ is independent of the tableaux $T$. Using \Cref{MN_rule_wreath_product}, we get
    $$\displaystyle \psi_{\lambda}(y)=\sum_{T\in \bst(\lambda,\mu)}(-1)^{\h(T)}R_T=\zeta^{\alpha} \sum_{T\in \bst(\lambda,\mu)}(-1)^{\h(T)}=\zeta^{\alpha}\mathrm{sign}_d(\hat{\lambda})\chi_{\hat{\lambda}}(w_{d\mu}).$$
    The last equality follows from the discussion below \Cref{Theorem_of_RR}. Our result follows in this case. See \Cref{example_1} for an illustration of this case.

    \medskip

    Now, we assume that $G$ is an abelian group of order $d$ and $a\in G$ has order $r$. Let $m=d/r$. If $y=(a,\cdots,a,\pi)$, then $g_i(y)=a^{\mu_i}$ for $1\leq i\leq t$. This means that $\chi_{f_T(i)}(g_i(y))=\chi_{f_T(i)}(a^{\mu_i})=\chi_{f_T(i)}|_{H}(a^{\mu_i})$, where $H=\langle a \rangle$. If $\chi \in \irr(G)$, then $\chi|_{H}\in \irr(H)$. Hence the result in this case follows from the previous case, once we show the validity of the hypotheses that for each $0\leq i\leq r-1$, there are exactly $m$ many irreducible characters of $G$ whose restriction to $H$ is $\theta_i$ ($\theta_i:a \mapsto \zeta$, where $\zeta=e^{2\pi i/r}$). Clearly, if $H$ is any subgroup of $G$, and $\varphi:H\to \mathbb{C}^{\times}$  be a character of $H$, then $\varphi$ can be extended to a character of $G$. This can be easily proved using induction on $[G:H]$ and we omit the details here. Let $X(G)$ denote the set of all multiplicative characters of $G$. $X(G)$ forms a group with $((\chi_1+\chi_2)(g):=\chi_1(g)\chi_2(g), \;\text{where}\;\chi_1,\chi_2\in X(G))$. Define the map $\Gamma:X(G)\to \mathbb{C}^{\times}$ by $\chi\mapsto \chi(a)$. Clearly, this map is a homomorphism with $\mathrm{ker}(\Gamma)=\{\chi\in X(G)\mid  \chi(a)=1\}$. Since any character of $H$($=\langle a\rangle$) can be extended to a character of $G$, it follows that $\Gamma(X(G))=\langle \zeta \rangle$. Since $|X(G)|=|G|=d$ and $|\Gamma(X(G))|=r$, it follows that $|\mathrm{Ker}(\Gamma)|=d/r=m$. More precisely, if $\chi|_H=\theta_i$, then every element of $\chi\mathrm{Ker}(\Gamma)$ restriced to $H$ is $\theta_i$ and we get our desired result. The proof is now complete. See \Cref{example_2} and \Cref{example_3} for illustrations.
\end{proof}

\begin{example}\label{example_1}
    Consider $G=\Z/6\Z=\{\bar{0}, \bar{1},\cdots, \bar{5}\}$.  Let $\chi_j:\Z/6\Z\to \mathbb{C}^{\times}$ be defined by $\bar{1}\mapsto \zeta^j$, where $\zeta=e^{\pi i/3}$. Now, we fix the labelling $\irr(G)=\{\chi_0,\chi_1,\chi_2,\chi_3,\chi_4,\chi_5\}$. Let $\lambda=(\lambda^{0},\cdots,\lambda^5)$ be a 6-partite partition of $n$ such that $\lambda^i\vdash n_i$ for all $1\leq i\leq 5$, and $\psi_{\lambda}$ be the irreducible character of $\Z/6\Z\wr S_n$ indexed by $\lambda$. Let $y=(\bar{5},\cdots,\bar{5},\pi)$, where $\pi\in S_n$ be as in the statement of \Cref{main_theorem}. Now, $\Z/6\Z=\langle \bar{5}\rangle$ and $\theta_i$ is defined by $\bar{5}\mapsto \zeta^i$. We observe that $\chi_0=\theta_0=\chi_{1^{(0)}}$, $\chi_1=\theta_5=\chi_{1^{(5)}}, \chi_2=\theta_4=\chi_{1^{(4)}}, \chi_3=\theta_3=\chi_{1^{(3)}}, \chi_4=\theta_2=\chi_{1^{(2)}}, \chi_5=\theta_1=\chi_{1^{(1)}}$. Thus, by our theorem, $\alpha=5n_1+4n_2+3n_3+2n_2+n_5$ and $\psi_{\lambda}(y)=\zeta^{\alpha}\mathrm{sign}_6(\hat{\lambda})\chi_{\hat{\lambda}}(w_{6\mu})$. 

    If $y=(\bar{1},\cdots,\bar{1},\pi)$, then $\psi_{\lambda}(y)=\zeta^{\alpha}\mathrm{sign}_6(\hat{\lambda})\chi_{\hat{\lambda}}(w_{6\mu})$, where $\alpha=n_1+2n_2+3n_3+4n_4+5n_5$. Recall that by \Cref{Theorem_of_RR}, $\psi_{\lambda}(\bar{0},\cdots,\bar{0},\pi)=\mathrm{sign}_6(\hat{\lambda})\chi_{\hat{\lambda}}(w_{6\mu}).$
\end{example}

\begin{example}\label{example_2}
    In the previous example, consider $y=(\bar{2},\cdots, \bar{2},\pi)$. Then, $H=\langle \bar{2} \rangle$ has order $3$. In this case, $r=3$ and $m=2$. Then, for $0\leq j\leq 2$, $\theta_j: \bar{2}\mapsto \omega^j$, where $\omega=e^{2\pi i/3}$. We fix the same ordering of $\irr(G)$ as in the previous example. Then, (a) $\chi_0|_H=\theta_0=\chi_{1^{(0)}}|_H$ and $\chi_3|_H=\theta_0=\chi_{2^{(0)}}|_H$, (b) $\chi_1|_H=\theta_1=\chi_{1^{(1)}}|_H$ and $\chi_4|_H=\theta_1=\chi_{2^{(1)}}|_H$, and (c) $\chi_2|_H=\theta_2=\chi_{1^{(2)}}|_H$ and $\chi_5|_H=\theta_2=\chi_{2^{(2)}}|_H$. Thus, from \Cref{main_theorem}, we obtain $\alpha=n_1+n_4+2n_2+2n_5$ and $\psi_{\lambda}(y)=\omega^{\alpha}\mathrm{sign}_6(\hat{\lambda})\chi_{\hat{\lambda}}(w_{6\mu})$. 
\end{example}

\begin{example}\label{example_3}
    Let $G=\Z/2\Z \times  Z/2\Z=\{(\bar{0},\bar{0}),(\bar{0},\bar{0}), (\bar{1},\bar{0}),(\bar{0},\bar{1}), (\bar{1},\bar{1}))\}=\langle (\bar{1},\bar{0}),(\bar{0},\bar{1}) \rangle$. For simplicity, let $a:=(\bar{1},\bar{0})$, $b:=(\bar{0},\bar{1})$, and  $c:=ab=(\bar{1},\bar{1})$. We fix a labelling of $\irr(G)=\{\chi_0,\chi_1,\chi_2,\chi_3\}$ where $\chi_0: a,b\mapsto 1$, $\chi_1:a\mapsto -1,b\mapsto 1$, $\chi_2:a\mapsto 1, b\mapsto -1$, $\chi_3:a,b\mapsto -1$. Let $y=(c,\cdots,c,\pi)\in G\wr S_n$. Set $H:=\langle c \rangle$. Then $|H|=2$, $r=2$, and $m=2$. For $0\leq i\leq 1$, we have $\theta_i:c\mapsto (-1)^i$. Further, (a) $\chi_{1^{(0)}}|_H=\theta_0=\chi_0|_H\;\text{and}\;\chi_{2^{(0)}}|_H=\theta_0=\chi_3|_H$, (b) $\chi_{1^{(1)}}|_H=\theta_1=\chi_1|_H\;\text{and}\;\chi_{2^{(1)}}|_H=\theta_1=\chi_2|_H$. Thus, $\alpha=n_1+n_2$ and $\psi_{\lambda}(y)=(-1)^{\alpha}\mathrm{sign}_4(\hat{\lambda})\chi_{\hat{\lambda}}(w_{4\mu})$. 
\end{example}

Similar to \Cref{Theorem_of_RR_general}, we can state \Cref{main_theorem} in the general set-up of any finite group $G$ (not just abelian).  Recall that if $G$ is any finite group, the multiplicative characters (equivalently, irreducible representations of dimension 1) of $G$ are given by those of $G/G'$, where $G'$ is the commutator subgroup of $G$. If $\tau:G\to G/G'$ is the canonical homomorphism and $\chi$ is a multiplicative character of $G$, then $\chi=\varphi \circ \tau$ for some multiplicative character $\varphi$ of $G/G'$. 

\medskip

Let $G$ be a finite group of order $d$ and $[G:G']=s$. Fix a labelling of the irreducible characters, $\irr(G)=\{\chi_0,\chi_1,\cdots,\chi_{d-1}\}$ such that  $\deg(\chi_i)=1$ for every $0\leq i\leq s-1$. Let $\irr(G/G')=\{\varphi_0,\cdots,\varphi_{s-1}\}$ and $\chi_i=\tau\circ\varphi_{i}$ for every $0\leq i\leq s-1$. 

\begin{theorem}\label{main_theorem2}
     With notations as above, Let $a\in G$ be such that $aG'$ has order $r$ in $G/G'$. Set $m=s/r$. Assume that for all $0\leq k\leq r-1$, $\{\varphi_{1^{(k)}}, \varphi_{2^{(k)}}, \cdots, \varphi_{m^{(k)}} \}$ be the set of all those irreducible characters of $G/G'$ (with respect to the labelling of $\irr(G/G')$) whose restriction to $H:=\langle aG' \rangle$ is $\theta_k$. Let $\lambda=(\lambda^0,\cdots,\lambda^{d-1})$ be a $d$-partite partition of $n$ with $\lambda^i\vdash n_i$ for every $0\leq i\leq d-1$, $\supp(\lambda)\subseteq \{0,1,\ldots,s-1\}$ (in other words, $n_i=0$ for every $i\geq s$), and $\psi_{\lambda}$ denote the irreducible character of $G\wr S_n$ indexed by $\lambda$ (w.r.t the fixed labelling of $\irr(G)$). Then,
     $$\psi_{\lambda} 
    (a,a,\ldots,a,\pi)=\zeta^{\alpha}\mathrm{sign}_d(\hat{\lambda})\chi_{\hat{\lambda}}(w_{d\mu}),$$
    where $\displaystyle \alpha=\sum_{k=0}^{r-1}\sum_{i=1}^{m}kn_{i^{(k)}}$, $\pi\in S_n$, $\pi=\pi_1\pi_2\cdots\pi_t$ be a disjoint cycle decomposition of $\pi$ with $l(\pi_i)=\mu_i$ for every $1\leq i\leq t$ (whence $\mu:=(\mu_1,\cdots, \mu_t))$, and $d\mu:=(d\mu_1,d\mu_2,\cdots,d\mu_t)$.
\end{theorem}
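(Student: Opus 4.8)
The plan is to reduce Theorem~\ref{main_theorem2} to Theorem~\ref{main_theorem} by transporting the computation from $G$ to the abelianization $G/G'$. The key structural observation is that the character values appearing in the Murnaghan--Nakayama sum of \Cref{MN_rule_wreath_product} only ever involve the linear characters $\chi_0,\ldots,\chi_{s-1}$, because the hypothesis $\supp(\lambda)\subseteq\{0,1,\ldots,s-1\}$ forces $f_T(i)\in\{0,\ldots,s-1\}$ for every $T\in\bst(\lambda,\mu)$ and every $i$. Thus in the formula
$$\psi_{\lambda}(a,\ldots,a,\pi)=\sum_{T\in \bst(\lambda,\mu)}\prod_{i=1}^{t}(-1)^{\h_T(i)}\chi_{f_T(i)}(g_i(y)),$$
with $y=(a,\ldots,a,\pi)$ and $g_i(y)=a^{\mu_i}$, every factor $\chi_{f_T(i)}(a^{\mu_i})$ is the value of a one-dimensional character. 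First I would record that for a linear character $\chi_j=\varphi_j\circ\tau$ (where $\tau:G\to G/G'$ is the canonical projection) one has $\chi_j(a^{\mu_i})=\varphi_j(a^{\mu_i}G')=\varphi_j\big((aG')^{\mu_i}\big)$, so that each factor depends only on the image $aG'$ in the abelian group $G/G'$.

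Having made this identification, the second step is essentially a bookkeeping reduction: the product $\prod_{i}\chi_{f_T(i)}(a^{\mu_i})$ is literally the product that would arise in computing a wreath-product character value for $(G/G')\wr S_n$ evaluated at $(aG',\ldots,aG',\pi)$, indexed by the same tuple of partitions $\lambda$ now viewed (via the labelling $\varphi_0,\ldots,\varphi_{s-1}$ of $\irr(G/G')$) as an $s$-partite partition. Since $aG'$ has order $r$ in the abelian group $G/G'$, and since the $\varphi_j$ split into the fibers $\{\varphi_{1^{(k)}},\ldots,\varphi_{m^{(k)}}\}$ over the characters $\theta_k$ of $H=\langle aG'\rangle$ exactly as required, I can invoke the cyclic/abelian case already established inside the proof of \Cref{main_theorem}. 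The computation there shows that for each fixed $T$ the product over the cells filling a single component $T_{\lambda^s}$ contributes $\zeta^{kn_s}$, and that summing the exponents over all components yields the constant $\zeta^{\alpha}$ with $\alpha=\sum_{k=0}^{r-1}\sum_{i=1}^{m}kn_{i^{(k)}}$, independent of $T$.

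The third step is to combine the constant scalar $\zeta^{\alpha}$ with the remaining sign sum. Once $R_T=\zeta^{\alpha}$ is pulled out of the sum, what is left is $\sum_{T\in\bst(\lambda,\mu)}(-1)^{\h(T)}$, which by \Cref{eqn_2} and \Cref{eqn_3} (the machinery underlying \Cref{Theorem_of_RR}) equals $\mathrm{sign}_d(\hat{\lambda})\chi_{\hat{\lambda}}(w_{d\mu})$. Here $\hat{\lambda}$ and $\mathrm{sign}_d$ are computed with respect to the full $d$-partite structure of $\lambda$; this is legitimate because the empty components $\lambda^i$ for $i\geq s$ contribute no cells to any border-strip tableaux, so the $d$-partite and $s$-partite peelings coincide combinatorially and $\hat{\lambda}\in\p_{dn}(\emptyset)$ is determined by the nonempty parts alone. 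Assembling the pieces gives $\psi_{\lambda}(a,\ldots,a,\pi)=\zeta^{\alpha}\mathrm{sign}_d(\hat{\lambda})\chi_{\hat{\lambda}}(w_{d\mu})$.

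The main obstacle I anticipate is the bookkeeping that justifies passing from the $d$-partite index set of $G$ to the $s$-partite index set of $G/G'$ cleanly, namely verifying that the hypothesis $\supp(\lambda)\subseteq\{0,\ldots,s-1\}$ genuinely suppresses all higher-dimensional characters $\chi_j$ ($j\geq s$) and that the extension/restriction argument for characters of $H$ (which in \Cref{main_theorem} relied on $G$ being abelian) carries over verbatim with $G/G'$ in place of $G$. Since $G/G'$ is abelian and $aG'$ plays the role formerly played by $a$, the key lemma that every character of $H=\langle aG'\rangle$ extends to $G/G'$ and that the fibers of $\Gamma:\chi\mapsto\chi(aG')$ all have size $m=s/r$ is exactly the statement proved inside \Cref{main_theorem}; so the reduction should be routine rather than delicate, and I would keep this final step brief, citing the abelian case directly.
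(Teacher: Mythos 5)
Your proposal is correct and follows essentially the same route as the paper: the paper's (one-line) proof reduces \Cref{main_theorem2} to the argument of \Cref{main_theorem} exactly as you do, by noting that $\supp(\lambda)\subseteq\{0,\ldots,s-1\}$ forces every factor $\chi_{f_T(i)}(a^{\mu_i})$ in the Murnaghan--Nakayama sum to be a linear character value factoring through the abelian group $G/G'$, so that $R_T=\zeta^{\alpha}$ is constant and the remaining sign sum is handled by \Cref{eqn_2} and \Cref{eqn_3} at the level of the full $d$-partite structure. Your additional care about keeping $\hat{\lambda}$, $\mathrm{sign}_d$, and $w_{d\mu}$ attached to the $d$-partite (rather than $s$-partite) data is exactly the point the paper leaves implicit, and you resolve it correctly.
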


\begin{proof}
    The proof is alike the proof of \Cref{Theorem_of_RR}. \Cref{example_4} provides an illustration.
\end{proof}

\begin{example}\label{example_4}
    Let $G=S_3$ and $\irr(G)=\{\chi_{(3)},\chi_{(1^3)},\chi_{(2,1)}\}$ be a fixed labelling of $\irr(G)$. The commutator subgroup of $S_3$ is $A_3$, and $S_3/A_3=\{A_3, \sigma A_3\}$ where $\sigma$ is an odd permutation. Let $\lambda=(\lambda^0,\lambda^1,\emptyset)$ be a 3-partite partition with $\lambda^i\vdash n_i$ for $i=0,1$. Let $y=(\eta,\ldots, \eta, \pi)\in S_3\wr S_n$ and $\pi$ be as in \Cref{main_theorem2}. If $\eta\in A_3$, then $\psi_{\lambda}(y)=\mathrm{sign}_6(\hat{\lambda})\chi_{\hat{\lambda}}(w_{6\mu})$. If $\eta\in S_3\setminus A_3$, then $\psi_{\lambda}(y)=(-1)^{n_2}\mathrm{sign}_6(\hat{\lambda})\chi_{\hat{\lambda}}(w_{6\mu})$.
\end{example}

\subsection*{Acknowledgement}

We thank Prof. Arvind Ayyer for fruitful discussions.
\bibliographystyle{alpha} 
\bibliography{Biblography}

\begin{thebibliography}{Mac95}

\bibitem[AR22]{rr}
Ron~M. Adin and Yuval Roichman.
\newblock On characters of wreath products.
\newblock {\em Comb. Theory}, 2(2):Paper No. 17, 12, 2022.

\bibitem[JK81]{jk}
Gordon James and Adalbert Kerber.
\newblock {\em The representation theory of the symmetric group}, volume~16 of
  {\em Encyclopedia of Mathematics and its Applications}.
\newblock Addison-Wesley Publishing Co., Reading, MA, 1981.
\newblock With a foreword by P. M. Cohn, With an introduction by Gilbert de B.
  Robinson.

\bibitem[LP21]{lp}
Frank L\"ubeck and Dipendra Prasad.
\newblock A character relationship between symmetric group and hyperoctahedral
  group.
\newblock {\em J. Combin. Theory Ser. A}, 179:Paper No. 105368, 20, 2021.

\bibitem[Mac95]{ma}
I.~G. Macdonald.
\newblock {\em Symmetric functions and {H}all polynomials}.
\newblock Oxford Mathematical Monographs. The Clarendon Press, Oxford
  University Press, New York, second edition, 1995.
\newblock With contributions by A. Zelevinsky, Oxford Science Publications.

\bibitem[Pra15]{ap}
Amritanshu Prasad.
\newblock {\em Representation theory}, volume 147 of {\em Cambridge Studies in
  Advanced Mathematics}.
\newblock Cambridge University Press, Delhi, 2015.
\newblock A combinatorial viewpoint.

\bibitem[Sta99]{stan}
Richard~P. Stanley.
\newblock {\em Enumerative combinatorics. {V}ol. 2}, volume~62 of {\em
  Cambridge Studies in Advanced Mathematics}.
\newblock Cambridge University Press, Cambridge, 1999.
\newblock With a foreword by Gian-Carlo Rota and appendix 1 by Sergey Fomin.

\bibitem[Ste89]{st}
John~R. Stembridge.
\newblock On the eigenvalues of representations of reflection groups and wreath
  products.
\newblock {\em Pacific J. Math.}, 140(2):353--396, 1989.

\end{thebibliography}

\end{document}